\documentclass[10pt]{amsart}
\usepackage[cp1250]{inputenc}
\usepackage{latexsym}
\usepackage{amsmath}
\usepackage{amsfonts}
\usepackage{amscd}
\usepackage{amsthm}
\usepackage{amssymb}
\usepackage{enumerate}
\usepackage{mathabx}
\usepackage{amsrefs}
\usepackage{color}
\usepackage{soul}

\usepackage[T1]{fontenc}
\author{\.Z.~Fechner, L.~Sz\'ekelyhidi}
\title[Gajda-Type Equation on Topological Groups]{A Generalization of Gajda's 
Equation on Commutative Topological Groups}
\dedicatory{Dedicated to the memory of Professor Pl. Kannappan}
\keywords{cosine function; integral-functional equation. }
\subjclass[2010]{39B52, 39B72.}
\address{Institute of Mathematical Finance, Ulm University, Helmholtzstraße 18, 89081 Ulm, Germany}
\email{zywilla.fechner@uni-ulm.de}
\address{Institute of Mathematics, University of Debrecen, Egyetem t\'er 1, 4032 Debrecen, Hungary --- Department of Mathematics, University of Botswana, 4775 Notwane Rd. Gaborone, Botswana }
\email{lszekelyhidi@gmail.com}

\theoremstyle{plain}
\newtheorem{theorem}{Theorem}

\theoremstyle{remark}


\newcommand{\C}{\mathbb{C}}

\renewcommand{\(}{\left(} \renewcommand{\)}{\right)}
 
\begin{document}

\begin{abstract}
In the present paper we deal with the following generalization of the sine-cosine equation
\begin{equation*}
\int f_1(x+y-t)+f_2(x-y+t) d\mu(t)=g(x)h(y)
\end{equation*}
for complex valued functions $f_1$, $f_2$, $g$ and $h$ defined on a 
commutative topological group $G$, where $\mu$ is a complex measure defined on $G$. 
\end{abstract}

\maketitle

\section{Introduction}
Let $G$ be an arbitrary group. One of the most famous trigonometric functional equations is 
\emph{d'Alembert's functional equation}:
\begin{equation}\label{eq:DalembertKlasyczny}
f(x+y)+f(x-y)=2f(x)f(y),\quad x,y \in G.
\end{equation}
Equation \eqref{eq:DalembertKlasyczny}, also called the \emph{cosine equation}, 
as $f=\cos$ satisfies \eqref{eq:DalembertKlasyczny} in the real-to-real case, has 
been investigated by many authors. Pl. Kannappan \cite[Kannappan]{MR0219936} considered d'Alembert functional 
equation if the unknown function is defined on an arbitrary commutative group and takes values in the field of complex numbers under certain commutative-type condition. 
\vskip.3cm

One of the possible generalizations of d'Alembert's functional equation is  {\it Wilson's functional equation}
\begin{equation}\label{eq:WilsonKlasyczny}
g(x+y)+g(x-y)=2g(x)f(y),\quad x,y \in G. 
\end{equation}
This is called also the \emph{sine-cosine functional equation} as $g=\sin$ and $f=\cos$ is a solution in the real-to-complex case. 
It is worth underlining that the main difficulty in solving Wilson's-type equations is to give a description of the \hbox{function $g$.}
This is not obvious even in the real-to-real case. One possible method is to use spectral synthesis. This was discussed in details in \cite[Sz\'ekelyhidi]{MR1113488}.
For further discussion of generalization of cosine and sine equations for unknown mappings defined on non-commutative groups see \cite[Stetk\ae r]{Stet13} and references therein. 
\vskip.3cm

Observe that \eqref{eq:DalembertKlasyczny} can be written as the convolution of the unknown function $f$ with a measure:
$$ 	f*\(\frac{1}{2}\,\delta_y\)(x)+f*\(\frac{1}{2}\,\delta_{-y}\)(x)=f(x)f(y),\quad x,y \in G\,, 
$$
where $\delta_y$ denotes the Dirac measure concentrated at $y$. Our aim is to generalize this equation by substituting the Dirac measure by a -- more or less -- arbitrary measure.
\vskip.3cm

In the same manner as for the d'Alembert equation we can rewrite \st{this} equation \textcolor{red}{\ref{eq:WilsonKlasyczny}} as convolution of the unknown function with the Dirac measure, however, this time we have two unknown functions, namely
$$ 	g*\(\frac{1}{2}\,\delta_y\)(x)+g*\(\frac{1}{2}\,\delta_{-y}\)(x)=g(x)f(y),\quad x,y \in G\,, 
$$
Hence our generalization works in two directions: we have more unknown functions and an "almost" arbitrary measure. 
\vskip.3cm

Motivation for this investigation is the following equation:
\begin{equation}
	(f*\mu_y)(x)+(f*\widecheck{\mu}_y)(x)=f(x)f(y),\quad x,y \in G\,,
	\label{eq:Gajda}
\end{equation}
which was introduced and solved by Z. Gajda in \cite[Gajda]{MR1065469} for essentially bounded measurable functions defined on a locally compact abelian group. 
Here $\mu_y$, resp. $\widecheck{\mu}$ denotes the {\it translate}, resp. the {\it inversion} of the measure $\mu$. The main tool used by Gajda was the Wiener Tauberian theorem, and he expressed 
the solution  as a linear combination of characters of the group with coefficients depending on the \hbox{measure $\mu$.}
\vskip.3cm

The next attempt was the investigation of the Gajda--type generalization of Wilson's functional equation, namely
\begin{equation}
	(g*\mu_y)(x)+(g*\widecheck{\mu}_y)(x)=g(x)f(y),\quad x,y \in G\,,
	\label{eq:WilsonSplot}
\end{equation}
which has been discussed in \cite[Fechner]{MR2515239}. In \cite[Fechner]{MR2807038} the following equation
\begin{equation}
	(f*\mu_y)(x)+(f*\widecheck{\mu}_y)(x)=g(x)f(y),\quad x,y \in G\,,
	\label{eq:WilsonSplotChange}
\end{equation}
has been investigated as a counterpart of \eqref{eq:WilsonSplot}.
\vskip.3cm

In this paper we shall consider the integral-functional equation
\begin{equation}\label{sincos1}
\int [f_1(x+y-t)+f_2(x-y+t) ] d\mu(t)=g(x)h(y),\quad x,y \in G\, ,
\end{equation} where $f_1,f_2,g,h\colon G\to \C$ are unknown functions 
and $\mu$ is a complex measure on $G$, 
or equivalently, we use the convolution form
\begin{equation}\label{sincos2}
(f_1*\mu)(x+y)+(\widecheck{f}_2*\mu)(x-y)=g(x)h(y),\quad x,y \in G\,,
\end{equation}
where $\widecheck{f}(x)=f(-x)$ for every $x$ in $G$, and we have interchanged the roles of $g$ and $h$.  This equation is a common generalization of \eqref{eq:Gajda}, \eqref{eq:WilsonSplot} and \eqref{eq:WilsonSplotChange}.
\vskip.3cm

In the forthcoming paragraphs we shall use the results in \cite[Sz\'ekelyhidi]{MR1113488} to give a complete 
description of the solutions of \eqref{sincos2}. The idea is that, by introducing 
the functions $F_1=f_1*\mu$ and $F_2=\widecheck{f}_2*\mu$, we have the 
functional equation 
\begin{equation}\label{dAlem1}
F_1(x+y)+F_2(x-y)=g(x)h(y),\quad x,y \in G\,,
\end{equation}
where $F_1,F_2$ have similar regularity properties like $f_1$ and $f_2$. Having 
the general solution of equation \eqref{dAlem1} we have to solve the 
inhomogeneous convolution equations, which define $F_1$ and $F_2$. 
\vskip.3cm

We may impose different conditions on the topology of $G$, on the functions and on the measure so that the 
integrals exist. If $G$ is locally compact, then we suppose that $\mu$ is a 
compactly supported Borel measure and the unknown functions are continuous. 
In particular, if $G$ is a discrete group, then $\mu$ is finitely supported and no 
conditions on the unknown functions are assumed. If $G$ is an arbitrary 
topological group, then $\mu$ is a Borel measure and the unknown functions 
are $\mu$-integrable.

\section*{Notation and terminology}
For a given function $f\colon G\to\C$, as above, we use the notation
$$
\widecheck{f}(x)=f(-x),\quad x\in G 
$$
and
\begin{equation*}\label{evenodd}
f_e(x)=\frac{1}{2}\(f(x)+\widecheck{f}(x)\),\hskip.2cm f_o(x)=\frac{1}{2}\(f(x)-\widecheck{f}(x)\)
\end{equation*}
for each $x$ in $G$, and we call these functions the {\it even part}, and the {\it 
odd part} of $f$, respectively. We have, obviously, $f=f_e+f_o$. 
\vskip.3cm

Let $G$ be a topological group. We call a nonzero function $m:G\to\C$ an {\it exponential}, if it satisfies
\begin{equation*}
m(x+y)=m(x)m(y)
\end{equation*}
for each $x,y$ in $G$. It is easy to see that an exponential never vanishes. A 
 function $a:G\to \C$ is called {\it additive}, if it satisfies
\begin{equation*}
a(x+y)=a(x)+a(y)
\end{equation*}
for each $x,y$ in $G$. For more about exponentials and additive functions see 
\cite[Sz\'ekelyhidi]{MR1113488}. In particular, we shall use the result, which says that
a representation of a function in the form $x\mapsto \bigl(a(x)+b\bigr) m(x)$ 
is unique, whenever $m$ is an exponential, $a$ is additive and $b$ is a complex 
number (see \cite[Sz\'ekelyhidi]{MR1113488}, Lemma 4.3, p.~41).  It follows that  functions of this form are linearly dependent if and only if they have the same exponential, and the corresponding $a(x)+b$ factors are linearly dependent.
\vskip.3cm

We shall deal with functions $T:G\to\C$, which are constant on the cosets of the subgroup $2 G$. Such functions we will call {\it $2 G$-periodic}.  Obviously, $2 G$-periodic functions are even. In particular, an exponential is $2 G$-periodic if and only if it is even. If $G$ is {\it 2-divisible}, that is $G=2 G$, then $2 G$-periodic functions are constant, $2 G$-periodic additive functions are identically zero, and $2 G$-periodic exponentials are identically $1$.
\vskip.3cm

If $\mu$ is a measure on $G$ with the property that the exponential $m$ is  integrable with respect to $\mu$, then we use the notation 
\begin{equation*}
\widehat{\mu}(m)=\int \widecheck{m} d\mu\,.
\end{equation*}
This is the standard notation used for the {\it Fourier--Stieltjes transform}, which is the restriction of $\widehat{\mu}$ to the dual of $G$ (see e.g. \cite[Rudin]{MR1038803}). We note that convolution is defined in the usual manner
\begin{equation*}\label{convo}
(f*\mu)(x)=\int  f(x-t)\,d\mu(t)\,,
\end{equation*}
whenever it exists.


\section{Solution of equation \eqref{dAlem1}}

In this section we describe the solutions of the functional equation 
\eqref{dAlem1} using the results of \cite[Sz\'ekelyhidi, Section 11]{MR1113488}.  

\begin{theorem}\label{dAlemsol}
Let $G$ be an abelian group and let $F_1,F_2,g,h\colon G\to\C$ be functions satisfying the functional equation \eqref{dAlem1} for each $x,y$ in $G$. Then the functions \hbox{$F=F_1+F_2$} and $H=F_1-F_2$ satisfy the functional equations
\begin{equation}\label{dAlemeven}
F(x+y)+F(x-y)=2g(x)h_e(y)\,,
\end{equation}
and
\begin{equation}\label{dAlemodd}
H(x+y)-H(x-y)=2g(x)h_o(y)
\end{equation}
for each $x,y$ in $G$.
\end{theorem}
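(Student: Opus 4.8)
The plan is to derive \eqref{dAlemeven} and \eqref{dAlemodd} by elementary substitutions in \eqref{dAlem1}, with no appeal to structure theory; the content of the theorem is really just a reorganization of the hypothesis. First I would replace $y$ by $-y$ in \eqref{dAlem1}, which gives
\begin{equation*}
F_1(x-y)+F_2(x+y)=g(x)h(-y)=g(x)\widecheck{h}(y),\qquad x,y\in G\,.
\end{equation*}
Adding this identity to \eqref{dAlem1} and grouping the terms on the left according to whether their argument is $x+y$ or $x-y$ yields
\begin{equation*}
\bigl(F_1+F_2\bigr)(x+y)+\bigl(F_1+F_2\bigr)(x-y)=g(x)\bigl(h(y)+\widecheck{h}(y)\bigr)\,,
\end{equation*}
and since $F=F_1+F_2$ and $h(y)+\widecheck{h}(y)=2h_e(y)$ this is precisely \eqref{dAlemeven}. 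Subtracting the $y\mapsto -y$ identity from \eqref{dAlem1} instead gives
\begin{equation*}
\bigl(F_1-F_2\bigr)(x+y)-\bigl(F_1-F_2\bigr)(x-y)=g(x)\bigl(h(y)-\widecheck{h}(y)\bigr)=2g(x)h_o(y)\,,
\end{equation*}
which, with $H=F_1-F_2$, is \eqref{dAlemodd}.

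There is no real obstacle in this argument: the whole proof consists of adding and subtracting two instances of the hypothesis. The only point requiring a moment of care is the bookkeeping after the substitution $y\mapsto -y$, i.e. keeping track of which of the four resulting summands collect into the argument $x+y$ and which into $x-y$; once that is set up correctly, \eqref{dAlemeven} and \eqref{dAlemodd} drop out immediately.

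It is worth noting, and I would record this after the proof, that the step is reversible. From $F$ and $H$ one recovers $F_1=\tfrac12(F+H)$ and $F_2=\tfrac12(F-H)$, while $h=h_e+h_o$; adding \eqref{dAlemeven} to \eqref{dAlemodd} and halving returns \eqref{dAlem1}. Thus \eqref{dAlem1} is equivalent to the pair \eqref{dAlemeven}--\eqref{dAlemodd}, and this is exactly what makes the decomposition useful: \eqref{dAlemeven} is a Wilson-type (sine--cosine) equation governed by the even part $h_e$, and \eqref{dAlemodd} is a sine-type equation governed by the odd part $h_o$, each of which can be treated separately by the results of \cite[Sz\'ekelyhidi]{MR1113488} before reassembling $F_1$ and $F_2$.
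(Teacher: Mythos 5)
Your proof is correct and follows exactly the same route as the paper: substitute $y\mapsto -y$ in \eqref{dAlem1}, then add and subtract the two instances to obtain \eqref{dAlemeven} and \eqref{dAlemodd}. The additional remark on reversibility is accurate and consistent with how the paper later reassembles $F_1=\tfrac12(F+H)$ and $F_2=\tfrac12(F-H)$.
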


\begin{proof}
Substituting $y$ by $-y$ in \eqref{dAlem1}, and then adding, resp. subtracting the new equation to, resp. from \eqref{dAlem1} we obtain \eqref{dAlemeven}, resp. \eqref{dAlemodd}. 
\end{proof}

First we describe the solutions of \eqref{dAlemeven}.

\begin{theorem}\label{dAlemevensol}
Let $G$ be an abelian group and let $F,g,h\colon G\to \C$ be functions satisfying the 
functional equation
\begin{equation}\label{dAlemevene}
F(x+y)+F(x-y)=2g(x)h_e(y)
\end{equation}
for each $x,y$ in $G$. Then we have the following possibilities:
\begin{enumerate}[i)]
\item
\begin{eqnarray*}
F(x)&=&\gamma(\alpha m(x)+\beta m(-x))\\
g(x)&=&\alpha m(x)+\beta m(-x)\\
h_e(x)&=&\frac{\gamma}{2}(m(x)+m(-x))\,,
\end{eqnarray*}
\item
\begin{eqnarray*}
F(x)&=&(a(x)+\alpha \beta) m_0(x)\\
g(x)&=&\left[\frac{1}{\alpha} a(x)+\beta\right] m_0(x)\\
h_e(x)&=&\alpha m_0(x),\quad \alpha \neq 0
\end{eqnarray*}
\item
\begin{eqnarray*}
F(x)&=&0\\
g(x)&=&0\\
h&=&\text{arbitrary function}
\end{eqnarray*}
\item
\begin{eqnarray*}
F(x)&=&0\\
g&=&\text{arbitrary function}\\
h&=&\text{arbitrary odd function}
\end{eqnarray*}
\end{enumerate}
for each $x$ in $G$, where $\alpha, \beta, \gamma$ are complex numbers, 
$\gamma\ne 0$, $a\colon G\to\C$ is an additive function and $m,m_0\colon G\to \C$ are exponentials with 
$m\ne \widecheck{m}$ and $m_0=\widecheck{m}_0$. 
Conversely, any functions with the given properties satisfy the functional 
equation \eqref{dAlemevene}. If, in addition, $G$ is a topological 
group, $F\ne 0$, and $g\ne 0$ is continuous, then $F,a,m$ are continuous, too. 
If $G$ is a locally compact abelian group, $F\ne 0$ and $g\ne 0$ is 
measurable, then $a,m,F,g,h_e$  are continuous.
\end{theorem}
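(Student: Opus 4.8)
The plan is to settle the degenerate cases by the substitution $y=0$, to reduce the genuinely nontrivial case to Wilson's functional equation, and then to quote the description of its solutions from \cite[Sz\'ekelyhidi, Section 11]{MR1113488}.

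Putting $y=0$ in \eqref{dAlemevene} gives $F(x)=h_e(0)\,g(x)$ for every $x$ in $G$. If $g=0$, then $F=0$ and $h$ is unrestricted, which is case iii). If $g\ne 0$ but $h_e(0)=0$, then again $F=0$, so \eqref{dAlemevene} reduces to $g(x)h_e(y)=0$; picking $x$ with $g(x)\ne 0$ forces $h_e\equiv 0$, that is, $h$ odd, while $g$ stays arbitrary, and this is case iv). So from now on I may assume $g\ne 0$ and $\gamma:=h_e(0)\ne 0$. Then $F=\gamma g$, and dividing \eqref{dAlemevene} by $\gamma$ and setting $f:=\gamma^{-1}h_e$ — which is even, with $f(0)=1$ — turns it into Wilson's equation
\begin{equation*}
g(x+y)+g(x-y)=2g(x)f(y),\qquad x,y\in G,
\end{equation*}
with $g\ne 0$; moreover $h_e=\gamma f$.

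Next I would invoke the solution of Wilson's equation, obtained in the usual way: summing the four terms $g(x\pm y\pm z)$ in two different groupings and dividing by $g(x)\ne 0$ shows that $f$ satisfies d'Alembert's equation \eqref{eq:DalembertKlasyczny}, whose solutions on an abelian group are classified in \cite[Sz\'ekelyhidi, Section 11]{MR1113488}, and feeding this back into Wilson's equation (again by \cite[Sz\'ekelyhidi, Section 11]{MR1113488}) leaves two cases. Either $f=\tfrac12(m+\widecheck{m})$ for an exponential $m$ with $m\ne\widecheck{m}$, and then $g=\alpha m+\beta\widecheck{m}$ for suitable $\alpha,\beta\in\C$; or $f=m_0$ for an even exponential $m_0$, in which case — dividing the Wilson equation by $m_0(x)m_0(y)$ and using $m_0(x-y)=m_0(x)m_0(y)$ — the function $G:=g/m_0$ satisfies Jensen's equation $G(x+y)+G(x-y)=2G(x)$, so $G=a+c$ with $a$ additive and $c\in\C$, i.e.\ $g=(a+c)m_0$. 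Substituting back $F=\gamma g$, $h_e=\gamma f$ and relabelling the constants (in the second case taking $\alpha=\gamma\ne 0$, replacing $a$ by $\gamma a$ and $c$ by $\beta$) reproduces exactly cases i) and ii). The converse — that every tuple listed in i)--iv) solves \eqref{dAlemevene} — is a one-line substitution in each case.

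For the regularity addendum: when $F\ne 0$ we are necessarily in case i) or ii), so $F=\gamma g$ with $\gamma\ne 0$ and $F$ is continuous as soon as $g$ is. Fixing $x_0$ with $g(x_0)\ne 0$, the function $y\mapsto f(y)=\tfrac12 g(x_0)^{-1}\bigl(g(x_0+y)+g(x_0-y)\bigr)$ is continuous; hence, by the continuity statements accompanying the d'Alembert classification in \cite[Sz\'ekelyhidi, Section 11]{MR1113488}, the exponential $m$ (respectively $m_0$) in the representation may be taken continuous, and then in case ii) the additive part $a=\alpha\bigl(g/m_0-\beta\bigr)$ is continuous too. If $G$ is locally compact and $g$ is only measurable, the same formulas make $f$, and hence $m$ (or $m_0$) and $a$, measurable, and measurable exponentials and measurable additive functions on a locally compact abelian group are automatically continuous, so $a,m,F,g$ and $h_e=\gamma f$ are all continuous. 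The step I expect to cost the most effort is pinning down the exact form of $g$ in the case $m\ne\widecheck{m}$ — that nothing beyond $\alpha m+\beta\widecheck{m}$ occurs — which rests on the uniqueness of representations $x\mapsto(a(x)+b)m(x)$ and on the description of the solution spaces of the pertinent Levi--Civita/Wilson equations in \cite[Sz\'ekelyhidi, Section 11]{MR1113488}; this is also where the borderline sub-case of $g$ proportional to $f$ has to be watched in the regularity argument.
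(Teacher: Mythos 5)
Your proof is correct, but it is organized differently from the paper's. The paper's own argument runs in the opposite direction: after disposing of the degenerate cases it applies Theorem~11.1 of Sz\'ekelyhidi's monograph \emph{directly to $F$}, obtaining that any nonzero solution $F$ of \eqref{dAlemevene} is either $\alpha m+\beta\widecheck{m}$ (with $m\ne\widecheck{m}$) or $(a+b)m_0$ (with $m_0$ even); it then uses the substitution $y=0$ (i.e.\ $F=h_e(0)g$, the same pivot you use) to conclude that $g$, and then $h_e$, must have the same form, and finishes by substituting back and renaming constants. You instead use $F=h_e(0)g$ at the outset to eliminate $F$ and reduce to Wilson's equation \eqref{eq:WilsonKlasyczny} for the pair $(g,\,h_e/h_e(0))$, re-derive d'Alembert's equation for $f=h_e/h_e(0)$ by the classical four-term summation, classify $f$ as $\tfrac12(m+\widecheck{m})$, and then recover $g$ --- by an appeal to the reference when $m\ne\widecheck{m}$, and by an elementary Jensen-equation argument when $m=\widecheck{m}=m_0$ (your observation that odd solutions of Jensen's equation on any abelian group are additive is correct and makes that branch self-contained). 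Both arguments lean on Section~11 of the same reference at their crucial step --- the paper for the Levi--Civita-type classification of $F$, you for the d'Alembert/Wilson classification --- so neither is strictly more elementary overall, but your route makes the even-exponential case explicit and transparent, while the paper's is shorter because a single citation delivers the form of $F$ at once. Your treatment of the degenerate cases iii)--iv) and of the regularity transfer (continuity/measurability of $f$ from that of $g$ via a fixed $x_0$ with $g(x_0)\ne 0$, then continuity of $m,m_0,a$ from the reference's regularity lemmas) matches what the paper does by citing Lemma~5.5 and Theorem~5.10 there.
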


\begin{proof}
The last two cases are obvious, so we suppose that $F\ne 0$. By Theorem 11.1, p. 97
in \cite[Sz\'ekelyhidi]{MR1113488}, it follows that $F$ has one of the following forms:
\begin{enumerate}[i)]
\item $F(x)=\alpha m(x)+\beta m(-x)$\,,
\item $F(x)=(a(x)+b)m_0(x)$
\end{enumerate}
for each $x$ in $G$, where $\alpha, \beta,b$ are complex numbers, 
$a\colon G\to\C$ is an additive function, \hbox{$m,m_0\colon G\to\C$} are exponentials, further $m\ne 
\widecheck{m}$ and $m_0=\widecheck{m}_0$. As $F$ is nonzero, hence 
$h_e(0)\ne 0$, which implies that $g$ has the same form with some different 
constants, and as $g$ is nonzero, hence the same holds for $h_e$. Substitution 
of the given expressions for $F,g,h_e$ into \eqref{dAlemevene} and renaming 
the constants we obtain our statement.
\vskip.3cm

The regularity statements follow immediately from Lemma 5.5  and Theorem  5.10 in  \cite[Sz\'ekelyhidi]{MR1113488}.
\end{proof}

Now we describe the solutions of \eqref{dAlemodd}.

\begin{theorem}\label{dAlemoddsol}
Let $G$ be an abelian group and let $H,g,h\colon G\to \C$ be functions satisfying the 
functional equation
\begin{equation}\label{dAlemodde}
H(x+y)-H(x-y)=2g(x)h_o(y)
\end{equation}
for each $x,y$ in $G$. Then we have the following 
possibilities:
\begin{enumerate}[i)]
\item
\begin{eqnarray*}
H(x)&=&\alpha \gamma m(x)-\beta \gamma m(-x)+T(x)\\
g(x)&=&\alpha m(x)+\beta m(-x)\\
h_o(x)&=&\frac{\gamma}{2}(m(x)-m(-x))\,,
\end{eqnarray*}
\item
\begin{eqnarray*}
H(x)&=&(a(x)+b)m_0(x)+T(x)\\
g(x)&=&\frac{1}{\alpha} m_0(x)\\
h_o(x)&=&\alpha a(x)m_0(x)\,,
\end{eqnarray*}
\item
\begin{eqnarray*}
H(x)&=&T(x)\\
g(x)&=&0\\
h&=&\text{arbitrary function}\,,
\end{eqnarray*}
\item
\begin{eqnarray*}
H(x)&=&T(x)\\
g&=&\text{arbitrary function}\\
h&=&\text{arbitrary even function}
\end{eqnarray*}
\item
\begin{eqnarray*}
H(x)&=&0\\
g(x)&=&0\\
h&=&\text{arbitrary function}\,,
\end{eqnarray*}
\item
\begin{eqnarray*}
H(x)&=&0\\
g&=&\text{arbitrary function}\\
h&=&\text{arbitrary even function}
\end{eqnarray*}
\end{enumerate}
for each $x$ in $G$, where $\alpha, \beta, \gamma$ are complex numbers, 
$\alpha\ne 0$,
$a\colon G\to \C$ is a nonzero additive function, $m,m_0\colon G\to \C$ are exponentials, 
$m\ne \widecheck{m}$, $m_0= \widecheck{m}_0$, further
$T\colon G\to\C$ is a $2 G$-periodic function. Conversely, the functions given with these properties satisfy 
the functional equation \eqref{dAlemevene}. If, in addition, $G$ is a  topological group and $g,h_o\ne 0$ are continuous, then $a,m,m_0$ are 
continuous,  too. If $G$ is a locally compact abelian group and $g,h_o\ne 0$ are 
measurable,  then $a,m,m_0,g,h_o$ are continuous.
\end{theorem}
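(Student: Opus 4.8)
The plan is to reduce equation \eqref{dAlemodde} to a form to which Theorem 11.2 (on Wilson-type equations) of \cite{MR1113488} applies, and then to read off the shape of $H$ from the general solution. First I would dispose of the degenerate cases: if $g=0$ the left-hand side of \eqref{dAlemodde} vanishes for all $x,y$, so $H(x+y)=H(x-y)$ for all $x,y$; putting $y=x$ this gives $H(2x)=H(0)$, and more generally $H$ is constant on cosets of $2G$, i.e.\ $H=T$ for some $2G$-periodic $T$ (this yields cases iii) and v), the latter being the sub-case $T=0$). Symmetrically, if $h_o=0$, i.e.\ $h$ is even, the same argument applies and gives cases iv) and vi). So from now on assume $g\ne 0$ and $h_o\ne 0$, in particular $h_o$ is a nonzero odd function.

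Next, observe that \eqref{dAlemodde} is antisymmetric in the sense that the left-hand side changes sign when $y$ is replaced by $-y$ (since $H(x-y)-H(x+y)=-\bigl(H(x+y)-H(x-y)\bigr)$), which is consistent with $h_o$ being odd. The key structural step is to symmetrize in $x$. Replacing $x$ by $-x$ in \eqref{dAlemodde} and using that the left side becomes $H(-x+y)-H(-x-y)=\widecheck H(x-y)-\widecheck H(x+y)$, I would add the result to \eqref{dAlemodde}; writing $H_o$ for the odd part of $H$, this produces
\begin{equation*}
H_o(x+y)-H_o(x-y)=g_e(x)\cdot 2h_o(y)\cdot\tfrac12\cdot 2 ,
\end{equation*}
more precisely $H_o(x+y)-H_o(x-y)=2g_e(x)h_o(y)$ after halving, while subtracting gives $H_e(x+y)-H_e(x-y)=2g_o(x)h_o(y)$. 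Since the left side of the first of these is an odd function of $y$ applied in a Wilson-type pattern, one sees $H_e$ and $g_o$ contribute only a $2G$-periodic additive ambiguity, which gets absorbed into $T$; the real content is the equation $H_o(x+y)-H_o(x-y)=2g_e(x)h_o(y)$ together with $H=H_o+H_e$. Substituting $x\leftrightarrow y$ and comparing, one finds $g$ (more precisely $g_e$) and $h_o$ each satisfy a d'Alembert/sine-type equation whose solutions are classical: $g_e$ is a linear combination of an exponential and its reflection, or has the form $(a(x)+b)m_0(x)$ with $m_0$ even, and similarly for $h_o$, which being odd forces either $h_o(x)=\tfrac\gamma2(m(x)-m(-x))$ with $m\ne\widecheck m$, or $h_o(x)=\alpha a(x)m_0(x)$ with $m_0$ even and $a$ a nonzero additive function.

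With $g$ and $h_o$ pinned down to these two families, I would substitute back into \eqref{dAlemodde} and solve for $H$ directly. In the first family one sets $g(x)=\alpha m(x)+\beta m(-x)$, $h_o(x)=\tfrac\gamma2(m(x)-m(-x))$ and checks that $H(x)=\alpha\gamma m(x)-\beta\gamma m(-x)+T(x)$ solves \eqref{dAlemodde} for every $2G$-periodic $T$ (the $T$ drops out because $T(x+y)=T(x-y)$), giving case i); in the second family one takes $g(x)=\tfrac1\alpha m_0(x)$, $h_o(x)=\alpha a(x)m_0(x)$ and verifies $H(x)=(a(x)+b)m_0(x)+T(x)$ works, using the additivity of $a$ and that $m_0$ is even, giving case ii). Matching constants (the product $g(x)h_o(y)$ must have a single exponential or a single $m_0$, forcing the exponentials appearing in $g$ and in $h_o$ to coincide, by the uniqueness statement quoted from Lemma 4.3 of \cite{MR1113488}) rules out mixed cases and shows i)--vi) are exhaustive. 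The converse direction is a routine substitution in each of the six cases. Finally, the regularity assertions follow exactly as in Theorem \ref{dAlemevensol}: once $g$ and $h_o$ are continuous (resp.\ measurable), Lemma 5.5 and Theorem 5.10 of \cite{MR1113488} force the exponentials $m,m_0$ and the additive function $a$ to be continuous.

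I expect the main obstacle to be the bookkeeping around the $2G$-periodic term $T$: because the ``odd'' d'Alembert equation \eqref{dAlemodde} only constrains $H$ through differences $H(x+y)-H(x-y)$, the even part of $H$ together with any $2G$-periodic perturbation is essentially invisible to the equation, and one must argue carefully --- separating $H$ into $H_o$ and $H_e$ and tracking which pieces are genuinely determined --- that the ambiguity is \emph{exactly} a $2G$-periodic function and nothing more. The rest is an application of the known classification for Wilson's equation plus direct verification.
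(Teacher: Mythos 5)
Your overall strategy differs from the paper's. The paper simply quotes Theorem 11.2 of \cite{MR1113488}, which classifies $H$ in the equation $H(x+y)-H(x-y)=2g(x)h_o(y)$ directly (an exponential combination, or $(a+b)m_0$, in each case plus a $2G$-periodic term), and then observes that $g$ and $h_o$, being recoverable from translates of $H$, have the same form; constants are then matched by substitution. You instead try to rebuild that classification by symmetrizing in $x$. Your treatment of the degenerate cases is fine, and so is the (implicit) observation that two solutions $H$ belonging to the same pair $(g,h_o)$ differ by a function $D$ with $D(x+y)=D(x-y)$, hence by a $2G$-periodic function --- that is exactly where $T$ comes from. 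The reduction $H_o(x+y)=g_e(x)h_o(y)+g_e(y)h_o(x)$, obtained by swapping $x$ and $y$ and using oddness of $H_o$, is also a legitimate route to $g_e$ and $h_o$, provided you actually cite or prove the classification of this sine-addition-type equation instead of asserting it.

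The genuine gap is in what you do with the other half of the symmetrization. The claim that ``$H_e$ and $g_o$ contribute only a $2G$-periodic additive ambiguity'' is false: in case (i) of the theorem with $\alpha\ne\beta$ one has $g_o=\frac{\alpha-\beta}{2}\(m-\widecheck{m}\)\ne 0$, and correspondingly $H_e(x)=\frac{(\alpha-\beta)\gamma}{2}\bigl(m(x)+m(-x)\bigr)+T(x)$, which is not $2G$-periodic. Your symmetrized equations determine only $g_e$ and $h_o$; the odd part of $g$ is governed by $H_e(x+y)-H_e(x-y)=2g_o(x)h_o(y)$, which is another instance of the original equation and is never solved in your argument. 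Yet in the final substitution you write down the full $g(x)=\alpha m(x)+\beta m(-x)$ as if it had been derived, so the step from ``$g_e$ is known'' to ``$g$ is known'' is missing, and with it the justification that cases i)--vi) are exhaustive. To close the gap you must either solve the residual equation for $(H_e,g_o)$ with the now-explicit $h_o$ (feasible, but it has to be done), or reverse the order of determination as the paper does: classify $H$ first via Theorem 11.2 of \cite{MR1113488} and then recover $g(x)=\bigl(H(x+y_0)-H(x-y_0)\bigr)/\bigl(2h_o(y_0)\bigr)$ for a fixed $y_0$ with $h_o(y_0)\ne 0$.
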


\begin{proof}
Similarly, like in the proof of the previous theorem, the last four cases are 
obvious, so we suppose that $H, g, h_o\ne 0$. Then, by Theorem 11.2 in \cite[Sz\'ekelyhidi]{MR1113488}, 
we have that $H$ has one of the following forms:
\begin{enumerate}[i)]
\item $H(x)=\alpha m(x)+\beta m(-x)+T(x)$\,,
\item $H(x)=(a(x)+b)m_0(x)+T(x)$
\end{enumerate}
for each $x$ in $G$, where $\alpha, \beta,b$ are complex numbers, 
$a\colon G\to\C$ is an additive function, \hbox{$m, m_0\colon G\to\C$} are exponentials, further $m\ne 
\widecheck{m}$ and $m_0=\widecheck{m}_0$, and finally $T\colon G\to\C$ is a 
$2 G$-periodic function. As $g$ and 
$h_o$ are nonzero, hence they have the same form with some different 
constants. Substitution of the given expressions for $H,g,h_o$ into 
\eqref{dAlemevene} and renaming the constants yields the statement.
\vskip.3cm

The regularity statements follow immediately from Lemma 5.5  and Theorem 
5.10  in  \cite[Sz\'ekelyhidi]{MR1113488}.
\end{proof}

Now we are in the position to describe all solutions of the functional \hbox{equation \eqref{dAlem1}.}

\begin{theorem}\label{dAlemsol1}
Let $G$ be an abelian group and let $F_1,F_2,g,h\colon G\to\C$ be functions 
satisfying the functional equation \eqref{dAlem1} for each $x,y$ in $G$. Then we 
have the following possibilities:
\begin{enumerate}[i)]
\item
\begin{eqnarray*}
F_1(x)&=&\alpha \gamma m(x)+\beta\delta m(-x)+T(x)\\
F_2(x)&=&\alpha \delta m(x)+\beta \gamma m(-x)-T(x)\\
g(x)&=&\alpha m(x)+\beta m(-x)\\
h(x)&=&\gamma m(x)+\delta m(-x)
\end{eqnarray*}
\item
\begin{eqnarray*}
F_1(x)&=&\frac{1}{2}(a(x)+\alpha \beta+\gamma) m_0(x)+T(x)\\
F_2(x)&=&\frac{1}{2}(-a(x)+\alpha \beta-\gamma) m_0(x)-T(x)\\
g(x)&=&\alpha m_0(x)\\
h(x)&=&\left[\frac{1}{\alpha}a(x)+\beta\right] m_0(x)
\end{eqnarray*}
\item
\begin{eqnarray*}
F_1(x)&=&\frac{1}{2}(a(x)+\alpha \beta+\gamma) m_0(x)+T(x)\\
F_2(x)&=&\frac{1}{2}(a(x)+\alpha \beta-\gamma) m_0(x)-T(x)\\
g(x)&=&\left[\frac{1}{\alpha}a(x)+\beta\right] m_0(x)\\
h(x)&=&\alpha m_0(x)
\end{eqnarray*}
\item
\begin{eqnarray*}
F_1(x)&=&T(x)\\
F_2(x)&=&-T(x)\\
g(x)&=&0\\
h&=&\text{arbitrary function}
\end{eqnarray*}
\item
\begin{eqnarray*}
F_1(x)&=&T(x)\\
F_2(x)&=&-T(x)\\
g&=&\text{arbitrary function}\\
h(x)&=&0
\end{eqnarray*}
\end{enumerate}
for each $x$ in $G$, where $\alpha, \beta, \gamma, \delta$ are complex  numbers, ($\alpha\ne 0$ in (ii) and (iii)), $a\colon G\to\C$ is a nonzero additive function, $m, m_0:G\to \C$ are 
exponentials, with $m_0$ is even, $m\ne \widecheck{m}$, and $T:G\to\C$ is a  $2 G$-periodic function. Conversely, the functions given  with these properties satisfy the functional equation \eqref{dAlem1}. If, in addition, $G$ is a topological group and $g,h\ne 0$ are  continuous,  then $a,m,m_0$ are continuous, too. If $G$ is a locally compact group  and 
$g,h\ne 0$ are measurable, then $a,m,m_0,g,h$ are continuous. If $G$ is 2-divisible, then $T$ is constant and the given regularity properties hold  for $F_1, F_2$, too.
\end{theorem}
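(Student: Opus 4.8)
The plan is to follow the route set up by the preceding three theorems: reduce \eqref{dAlem1} to its ``even'' and ``odd'' companions and then glue the two solution lists together. Set $F=F_1+F_2$ and $H=F_1-F_2$, so that $F_1=\tfrac12(F+H)$ and $F_2=\tfrac12(F-H)$. By Theorem~\ref{dAlemsol} the triple $(F,g,h_e)$ satisfies \eqref{dAlemevene} and the triple $(H,g,h_o)$ satisfies \eqref{dAlemodde}, with the \emph{same} function $g$ and with $h_e,h_o$ the even and odd parts of the single function $h$. Once the admissible combinations of solution-forms for these two equations are identified, $F_1$, $F_2$ and $h=h_e+h_o$ follow by addition, and a renaming of constants delivers one of the five cases.

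First I would treat the degenerate situations. If $g=0$ or $h=0$, then \eqref{dAlem1} becomes $F_1(x+y)+F_2(x-y)=0$; setting $y=0$ gives $F_2=-F_1$, whence $F_1(x+y)=F_1(x-y)$ for all $x,y$, which is precisely the statement that $F_1$ is constant on the cosets of $2G$. Thus $F_1=T$ is $2G$-periodic, $F_2=-T$, and the remaining function is arbitrary; this is case (iv) (when $g=0$) or case (v) (when $h=0$). So assume henceforth $g\ne0$ and $h\ne0$, hence at least one of $h_e,h_o$ is nonzero, and apply Theorems~\ref{dAlemevensol} and \ref{dAlemoddsol} to $(F,g,h_e)$ and $(H,g,h_o)$. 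The decisive point is that the two descriptions obtained for $g$ must coincide as functions. By the uniqueness of representations of the form $x\mapsto\bigl(a(x)+b\bigr)m(x)$ (Lemma~4.3 of \cite[Sz\'ekelyhidi]{MR1113488}) together with the linear independence of distinct exponentials, the ``exponential-sum'' alternative $g=\alpha m+\beta\widecheck m$ with $m\ne\widecheck m$ is incompatible with an alternative of the form $g=(\text{const})\,m_0$ with $m_0$ even, and likewise a $g$ carrying a genuine additive factor cannot be an exponential sum. Consequently either both equations realise their ``(i)'' alternative with one common exponential $m$ ($m\ne\widecheck m$), or both stay within the even-exponential regime with a common even $m_0$, one of the two analyses being possibly in a trivial branch (e.g. $F=0$, forced by $h_e=0$, or $H=T$, forced by $h_o=0$). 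In the first case, adding the formulas for $F$ and $H$ and writing $h=h_e+h_o=\gamma m+\delta\widecheck m$ produces case (i); in the second, one lands in case (ii) or (iii) according to which of $g,h$ carries the additive term, the surplus constants and the factors of $\tfrac12$ being absorbed into $\gamma$ and into the $2G$-periodic function $T$.

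For the converse, substitute each of (i)--(v) into \eqref{dAlem1}: the exponential identity $m(x\pm y)=m(x)m(\pm y)$ (and $m_0(x-y)=m_0(x)m_0(y)$ for the even exponential), additivity $a(x\pm y)=a(x)\pm a(y)$, and the cancellation $T(x+y)=T(x-y)$ (valid since $(x+y)-(x-y)=2y\in 2G$) reduce the left-hand side to the product $g(x)h(y)$ in every case. The regularity statements are inherited from those in Theorems~\ref{dAlemevensol} and \ref{dAlemoddsol}: since inversion $x\mapsto -x$ is a homeomorphism, $h_e$ and $h_o$ are continuous (respectively measurable) whenever $h$ is, and by the dichotomy above at least one of the two analyses sits in a non-degenerate branch, whose regularity clause supplies continuity of the additive functions and exponentials appearing in the answer. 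Finally, if $G$ is $2$-divisible then every $2G$-periodic function is constant, so $T$ is continuous; as $F$ and $H$ are then assembled solely from continuous exponentials, continuous additive functions and constants, $F_1=\tfrac12(F+H)$ and $F_2=\tfrac12(F-H)$ inherit the continuity as well.

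The step I expect to be the real work is the gluing: verifying that the ``mixed'' pairings between the two theorems are genuinely excluded, tracking the constants $\alpha,\beta,\gamma,\delta$, the additive function $a$, the constant $b$ and the periodic part $T$ through both lists, and checking that every surviving combination --- including the several degenerate branches --- collapses, after renaming, to exactly one of the five forms (i)--(v).
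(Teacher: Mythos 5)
Your proposal follows the same route as the paper: decompose into $F=F_1+F_2$ and $H=F_1-F_2$, invoke Theorems~\ref{dAlemevensol} and \ref{dAlemoddsol} on the pairs $(F,g,h_e)$ and $(H,g,h_o)$, and match the two resulting forms of $g$ via the linear independence of distinct exponentials and the uniqueness of representations $x\mapsto\bigl(a(x)+b\bigr)m(x)$. The only differences are presentational --- you dispatch the $g=0$ and $h=0$ cases by a direct substitution (correctly, since $F_1(x+y)=F_1(x-y)$ does force $2G$-periodicity) instead of reading them off the pairing table, and you defer the explicit enumeration of pairings that occupies the bulk of the paper's proof, but the dichotomy you identify is exactly the one that enumeration confirms.
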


\begin{proof}
By Theorem \ref{dAlemevensol} and Theorem \ref{dAlemoddsol}, we know the 
possible forms of  $F=F_1+F_2$ and $H=F_1-F_2$, further
\begin{equation*}
F_1=\frac{1}{2}(F+H),\hskip.2cm F_2=\frac{1}{2}(F-H)\,.
\end{equation*} 
The point is that in the formulas given for $F$ and $H$ in Theorem  \ref{dAlemevensol} and Theorem \ref{dAlemoddsol} the function $g$ is the  same. We have to pair the cases given in Theorems \ref{dAlemevensol} and  \ref{dAlemoddsol} in such a way that $g$ has the same form given in the two  cases. In the following part of the proof we go through all possible pairings of 
the  cases in the two theorems above.
\vskip.3cm

In the first case we consider Case $(i)$ in Theorem \ref{dAlemevensol} and Case $(i)$ Theorem \ref{dAlemoddsol}, so that we have
\begin{equation*}
g(x)=\alpha m(x)+\beta m(-x)=\alpha' m'(x)+\beta' m'(-x)
\end{equation*}
for each $x$ in $G$, where $\alpha,\beta,\alpha',\beta'$ are constants, $m,m'$  are exponentials and $m\ne \widecheck{m}$, $m'\ne \widecheck{m}'$. By the  linear independence of different exponentials we have that in this case $m=m'$, 
or $\widecheck{m}=m'$. By symmetry, we may suppose that $m=m'$, hence  $\alpha=\alpha'$ and $\beta=\beta'$. It follows that in the formulas for $F$ and $H$ we have the same $m$, that is
\begin{eqnarray*}
F_1(x)&=&\alpha \gamma m(x)+\beta\delta m(-x)+T(x)\\
F_2(x)&=&\alpha \delta m(x)+\beta \gamma m(-x)-T(x)\\
g(x)&=&\alpha m(x)+\beta m(-x)\\
h(x)&=&\gamma m(x)+\delta m(-x)
\end{eqnarray*}
for each $x$ in $G$. Here $\alpha, \beta, \gamma, \delta$ are arbitrary  complex numbers, $m$ is an exponential and $T:G\to\C$ is a $2 G$-periodic function. This is Case $(i)$ in our statement.
\vskip.3cm

Now we pair Case $(i)$ in Theorem \ref{dAlemevensol} with Case $(ii)$ in Theorem  \ref{dAlemoddsol}. In this case we must have $\beta=0$ and  $m=\widecheck{m}=m_0$, by the linear independence of different  exponentials. However, $m\ne \widecheck{m}$ in Case $(i)$ of Theorem \ref{dAlemevensol}, hence this pairing is impossible.
\vskip.3cm

Pairing Case $(i)$ in Theorem \ref{dAlemevensol} with Case $(iii)$ in Theorem 
\ref{dAlemoddsol} gives $g=0$, hence $\alpha=\beta=0$, that is 
$F_1+F_2=0$, which gives immediately our Case $(iv)$ above. Finally, pairing of 
Case $(i)$ in Theorem \ref{dAlemevensol} with Case $(iv)$ in Theorem \ref{dAlemoddsol} yields Case $(i)$ 
in our statement with $\delta=\gamma=\frac{1}{2}\gamma'$, where $\gamma'$ denotes the constant from Theorem \ref{dAlemevensol} case $(i)$.
\vskip.3cm

Pairing Case $(ii)$ in Theorem \ref{dAlemevensol} with Case $(i)$ is impossible: by independence of exponentials we have  $m=\widecheck{m}=m_0$ but $m\ne \widecheck{m}$ in Case $(i)$ of Theorem \ref{dAlemevensol}.  Pairing Case $(ii)$ in Theorem \ref{dAlemevensol} with Case $(ii)$ in 
Theorem \ref{dAlemoddsol} gives Case $(ii)$ above. Pairing Case $(ii)$ in Theorem 
\ref{dAlemevensol} with Case $(iii)$
Theorem \ref{dAlemoddsol} gives Case $(iii)$ in our present theorem. Finally, 
pairing Case $(ii)$ in Theorem 
\ref{dAlemevensol} with Case $(iv)$
Theorem \ref{dAlemoddsol} gives Case $(iii)$ above with $\gamma=0$.
\vskip.3cm

Pairing Case $(iii)$ in Theorem \ref{dAlemevensol} with Case $(i)$, with Case $(iii)$, or 
with Case $(iv)$ in 
Theorem \ref{dAlemoddsol} results in Case $(iv)$ above, and pairing Case $(iii)$ in 
Theorem \ref{dAlemevensol} with Case $(ii)$ in 
Theorem \ref{dAlemoddsol} is impossible.
\vskip.3cm

Pairing Case $(iv)$ in Theorem \ref{dAlemevensol} with Case $(i)$, resp. with Case $(ii)$ in  
Theorem  \ref{dAlemoddsol} gives Case $(i)$, resp. Case $(ii)$ above. Finally, pairing 
Case $(iv)$ in Theorem \ref{dAlemevensol} with Case $(iii)$ in  
Theorem \ref{dAlemoddsol} gives Case $(iv)$ above, and pairing 
Case $(iv)$ in Theorem \ref{dAlemevensol} with Case $(iv)$ in  
Theorem \ref{dAlemoddsol} gives Case $(v)$ above.
\vskip.3cm

It is a simple calculation to check the in all cases listed above the given functions 
are solutions of the functional equation \eqref{dAlem1}. Finally, the regularity 
statements are consequences of the previous theorems.
\end{proof}

\section{Solution of Gajda-type equations}


In this section we apply our results to the functional equation
\begin{equation}\label{Fech}
\int  \textcolor{red}{[}f(x+y-t)+f(x-y+t)\textcolor{red}{]} d\mu(t)=f(x)k(y)\,,
\end{equation}
which is a special case of \eqref{sincos1} with the choice $f=f_1=f_2=g$ and $k=h$.  For the existence of the integral in \eqref{Fech} we can use different assumptions on the group $G$, the measure $\mu$ and the unknown functions $f,k$. Equation \eqref{Fech} was studied in \cite[Fechner]{MR2515239} on locally compact abelian groups with the assumption that $f,h$ are essentially bounded Haar measurable functions and $\mu$ is a regular bounded complex Borel measure. For the moment we assume that $G$ is an abelian group, further the measure $\mu$ on $G$ and the functions $f,k$ are such that the above integral exists for each $x,y$ in $G$. For instance, this is the case if $G$ is a topological abelian group, $\mu$ is a compactly supported Radon measure on $G$, and $f,k$ are continuous functions. 
\vskip.3cm

Our idea is to apply Theorem \ref{dAlemsol1}. Using the notation of Theorem  \ref{dAlemsol1} we have 
$$
F_1=f*\mu,\hskip.2cm F_2=\widecheck{f}*\mu,\hskip.2cm g=f,\hskip.2cm h=k\,.
$$ 
Obviously,  we may suppose that $f\ne 0$. In addition we suppose that $k\ne 0$, too. Then we 
have three possibilities given by Theorem \ref{dAlemsol1}.
\vskip.3cm

 In the first case
\begin{equation}\label{fform1}
f(x)=\gamma m(x)+\delta m(-x),\hskip.2cm k(x)=\alpha m(x)+\beta m(-x)\,,
\end{equation}
and, by the form of $F_1$ and $F_2$,
 we have
 \begin{equation*}
\alpha \gamma m(x)+\beta \delta m(-x)+T(x)=\gamma \widehat{\mu}(\widecheck{m}) m(x)+\delta \widehat{\mu}(m) m(-x)\,,
 \end{equation*} 
 further
\begin{equation*}
\alpha \delta m(x)+\beta \gamma m(-x)-T(x)=\gamma \widehat{\mu}(m) m(-x)+\delta \widehat{\mu}(\widecheck{m}) m(x)\,.
\end{equation*}
Here $\alpha,\beta,\gamma,\delta$ are complex numbers, where at least one of $\gamma$ and $\delta$ is nonzero, $m$ is a non-even exponential, and $T$ is $2 G$-periodic. Using the fact that $m$ and $\widecheck{m}$ are linearly independent, substitution into \eqref{Fech} gives the following necessary and sufficient condition for $f,k$ is a solution:
\begin{eqnarray*}
\gamma \widehat{\mu}(m)&=&\alpha \gamma\\
\gamma \widehat{\mu}(\widecheck{m})&=&\beta \gamma\\
\delta \widehat{\mu}(m)&=&\alpha \delta\\
\delta \widehat{\mu}(\widecheck{m})&=&\beta \delta\,.
\end{eqnarray*}
By the condition on $\gamma,\delta$, we infer $\alpha=\widehat{\mu}(m)$ and $\beta=\widehat{\mu}(\widecheck{m})$. In this case we have $T=0$, which is $2 G$-periodic and $f,k$ is a solution of \eqref{Fech}. We note that $f,k$ of the form obtained in this way is a solution also in the case, when $m$ is an even exponential, as it is easy to see.
\vskip.3cm

In the second case of Theorem \ref{dAlemsol1} we have
\begin{equation}\label{fform3}
f(x)=\alpha m_0(x),\hskip.2cm k(x)=\left[\frac{1}{\alpha}a(x)+\beta\right]m_0(x)\,,
\end{equation}
and, by the form of $F_1$ and $F_2$, we have
 \begin{equation*}
\left[\frac{1}{2}a(x)+\frac{1}{2}(\alpha \beta+\gamma)\right]m_0(x)+T(x)=\alpha m_0(x)\widehat{\mu}(m_0)\,,
 \end{equation*} 
 further
\begin{equation*}
\left[\frac{1}{2}a(x)+\frac{1}{2}(\alpha \beta-\gamma)\right]m_0(x)-T(x)=\alpha m_0(x)\widehat{\mu}(m_0)\,.
 \end{equation*} 
Here $\alpha,\beta,\gamma$ are complex numbers, where $\alpha$ is nonzero, $m_0$ is an even exponential, and $T$ is $2 G$-periodic.
Substitution into \eqref{Fech} gives that $a=0$ and $\beta=2\widehat{\mu}(m_0)$. In this case $T$ is $2 G$-periodic, hence we have a solution. 
However, this solution is included in the first case with $m_0=m=\widecheck{m}$, and $\alpha=\gamma+\delta$.

\vskip.3cm

Finally, in the third case of Theorem \ref{dAlemsol1} we have
\begin{equation}\label{fform2}
f(x)=\left[\frac{1}{\alpha}a(x)+\beta \right] m_0(x),\hskip.2cm k(x)=\alpha m_0(x)\,,
\end{equation}
and, by the form of $F_1$ and $F_2$,
 we conclude
 \begin{equation*}
\left[\frac{1}{2}a(x)+\frac{1}{2}(\alpha \beta+\gamma)\right]m_0(x)+T(x)=
 \end{equation*} 
 \begin{equation*}
\frac{1}{\alpha}a(x)m_0(x)\widehat{\mu}(m_0)+\beta m_0(x)\widehat{\mu}(m_0)-\frac{1}{\alpha} m_0(x) \int a(y)m_0(y) d\mu(y)\,,
 \end{equation*}
 further
\begin{equation*}
\left[-\frac{1}{2}a(x)+\frac{1}{2}(\alpha \beta-\gamma)\right]m_0(x)-T(x)=
 \end{equation*} 
 \begin{equation*}
-\frac{1}{\alpha}a(x)m_0(x)\widehat{\mu}(m_0)+\beta m_0(x)\widehat{\mu}(m_0)+\frac{1}{\alpha} m_0(x) \int a(y) m_0(y) d\mu(y)\,.
\end{equation*}
Here $\alpha,\beta,\gamma$ are complex numbers, where $\alpha$ is nonzero, $m_0$ is an even exponential, and $T$ is $2 G$-periodic. 
Substitution into \eqref{Fech} gives the following necessary and sufficient condition for $f,k$ is a solution: $\alpha=2\widehat{\mu}(m_0)$. 
In this case the two equations for $T$ hold true and $T$ is $2 G$-periodic. It follows that in this case we have a solution if and only if $\widehat{\mu}(m_0)$ is nonzero for some even exponential $m_0$.
\vskip.3cm

We can summarize our results on the equation \eqref{Fech} in the following 
result.

\begin{theorem}\label{Fechsol}
Let $G$ be an abelian group, $\mu$ a measure on $G$, and let $f,k:G\to \C$ be nonzero functions such that the integral in \eqref{Fech} exists for each $x,y$ in $G$, further \eqref{Fech} holds. Then we have the following 
possibilities
\begin{enumerate}[i)]
\item 
\begin{equation*}
f(x)= \gamma m(x)+ \delta m(-x),\hskip.2cm k(x)=\widehat{\mu}(m) m(x)+ \widehat{\mu}(\widecheck{m}) m(-x)
\end{equation*}
for each $x$ in $G$, where $m$ is an exponential, and 
$\gamma,\delta$ are complex numbers.
\item 
\begin{equation*}
f(x)=\left[\frac{1}{2\widehat{\mu}(m_0)}a(x)+\beta\right]m_0(x),\hskip.2cm k(x)=2 \widehat{\mu}(m_0) m_0(x)
\end{equation*}
for each $x$ in $G$, where $m_0$ is an even exponential with $\widehat{\mu}(m_0)\ne 0$, $a$ is an additive function, and $\beta$ is a complex number. 
\end{enumerate}
Conversely, the functions $f,k$ given above are solutions of 
\eqref{Fech}, whenever the given conditions are satisfied. If $G$ is a topological 
group, and $f$ or $k$ is continuous, then $a, m$ and $m_0$ are continuous, too. 
If $G$ is locally compact, and $f$ or $k$ is Haar measurable, then $f,k,a,m,m_0$ are  continuous. If $f$ or $k$ is essentially bounded and Haar measurable, then $a=0$,
$f,k,m,m_0$ are  continuous, moreover $m,m_0$ are characters of $G$. 
\end{theorem}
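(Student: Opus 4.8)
The plan is to reduce equation~\eqref{Fech} to equation~\eqref{dAlem1}, which was completely solved in Theorem~\ref{dAlemsol1}. Put $F_1=f*\mu$, $F_2=\widecheck f*\mu$, $g=f$ and $h=k$; then \eqref{Fech} is literally \eqref{dAlem1}, and since $g=f$ and $h=k$ are both assumed nonzero, only cases~(i), (ii) and (iii) of Theorem~\ref{dAlemsol1} can occur. In each of these cases the parametrized forms of $g$ and $h$ are known, and the extra input still to be exploited is that $F_1$ and $F_2$ are not arbitrary: they are the convolutions $f*\mu$ and $\widecheck f*\mu$. So the strategy is, in each case, to compute $f*\mu$ and $\widecheck f*\mu$ from the given form of $f$, compare with the form of $F_1,F_2$ supplied by Theorem~\ref{dAlemsol1}, and read off the resulting constraints on the free constants. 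The key computational fact is the identity $(m*\mu)(x)=m(x)\,\widehat\mu(m)$ for an exponential $m$, so that the abstract constants get pinned to Fourier--Stieltjes values of $\mu$.

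Concretely: in case~(i) one has $f(x)=\gamma m(x)+\delta m(-x)$ and $k(x)=\alpha m(x)+\beta m(-x)$ with $m\neq\widecheck m$; computing the two convolutions and using linear independence of $m$ and $\widecheck m$ gives $\gamma\widehat\mu(m)=\alpha\gamma$, $\gamma\widehat\mu(\widecheck m)=\beta\gamma$, $\delta\widehat\mu(m)=\alpha\delta$, $\delta\widehat\mu(\widecheck m)=\beta\delta$, together with $T=0$. Since at least one of $\gamma,\delta$ is nonzero we get $\alpha=\widehat\mu(m)$, $\beta=\widehat\mu(\widecheck m)$, and a direct substitution shows these functions solve \eqref{Fech} even when $m$ is even; this is possibility~(i). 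In case~(ii), where $f(x)=\alpha m_0(x)$ with $m_0$ even, the same comparison forces $a=0$ and $\beta=2\widehat\mu(m_0)$, and the resulting pair is already subsumed in possibility~(i) (take $m=\widecheck m=m_0$). In case~(iii), where $f(x)=\bigl[\tfrac1\alpha a(x)+\beta\bigr]m_0(x)$ and $k(x)=\alpha m_0(x)$, expanding $f*\mu$ — which now produces the extra term $\tfrac1\alpha m_0(x)\int a(y)m_0(y)\,d\mu(y)$ — and matching coefficients of $a(x)m_0(x)$ and of $m_0(x)$ yields the single condition $\alpha=2\widehat\mu(m_0)$; in particular this branch is available only when $\widehat\mu(m_0)\neq0$, and it is possibility~(ii). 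The converse — that the functions listed in (i) and (ii) always satisfy \eqref{Fech} — is checked by plugging them in and again using $(m*\mu)(x)=m(x)\widehat\mu(m)$.

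For the regularity assertions I would argue as follows. The exponentials $m,m_0$ and the additive function $a$ are built canonically from $g=f$ and $h=k$ via Theorem~\ref{dAlemsol1}, so continuity, respectively Haar measurability on a locally compact group, of $f$ or $k$ transfers to $m$, $m_0$, $a$ by the regularity clauses of that theorem and by Lemma~5.5 and Theorem~5.10 of \cite[Sz\'ekelyhidi]{MR1113488}, and then back to $f,k$. The only genuinely new point is the essentially bounded case: there $k=2\widehat\mu(m_0)m_0$ (or, in possibility~(i), a nonzero combination of $m$ and $\widecheck m$) is essentially bounded, whence $m_0$, and then $\widecheck m_0=m_0^{-1}$, are bounded; since $|m_0|$ is itself an exponential into the positive reals, boundedness of $|m_0|$ and of $|m_0|^{-1}$ forces $|m_0|\equiv1$, i.e.\ $m_0$ is a character, and likewise for $m$. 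Finally, in possibility~(ii), $f=\bigl[\tfrac1{2\widehat\mu(m_0)}a+\beta\bigr]m_0$ with $|m_0|=1$ is bounded only if $a$ is bounded, and a bounded additive function vanishes identically (from $a(nx)=na(x)$), so $a=0$.

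The step I expect to be the main obstacle is the bookkeeping in the three case comparisons — keeping straight which products of $\alpha,\beta,\gamma,\delta$ must equal which Fourier--Stieltjes values, handling those equations correctly when some constant vanishes, and recognizing that the non-trivial additive branch survives only as possibility~(ii) while the purely exponential content of case~(ii) and of case~(iii) with $a=0$ collapses into possibility~(i). Everything else is bounded, routine computation.
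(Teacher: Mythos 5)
Your proposal is correct and follows essentially the same route as the paper: setting $F_1=f*\mu$, $F_2=\widecheck f*\mu$, $g=f$, $h=k$, running through cases (i)--(iii) of Theorem~\ref{dAlemsol1}, and pinning the constants via $(m*\mu)(x)=m(x)\widehat\mu(m)$, with exactly the same conclusions in each case (including the collapse of case (ii) into possibility (i) and the condition $\widehat\mu(m_0)\neq 0$ in case (iii)). Your explicit argument for the essentially bounded clause (bounded exponentials are characters, bounded additive functions vanish) is slightly more detailed than the paper, which merely cites Lemma 5.5 and Theorem 5.10 of Sz\'ekelyhidi, but it is the intended argument.
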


We note that the regularity statements follow from the above results, or directly 
from Lemma 5.5 (p.48) and Theorem 5.10 (p.51) in  \cite[Sz\'ekelyhidi]{MR1113488}.
\vskip.3cm
In a similar way we can obtain the more general solutions of equation \eqref{eq:WilsonSplotChange}, that is
\begin{equation}
\int  [f(x+y-t)+f(x-y+t) ] d\mu(t)=k(x)f(y),\quad x,y \in G\,,
	\label{eq:WilsonSplotChangeIntegral}
\end{equation}
Our preliminary assumption for the computations below is again that $G$ is an abelian group, further the measure $\mu$ on $G$ and the functions $f,k$ are such that the above integral exists for each $x,y$ in $G$. Interchanging $x$ and $y$ and using the notation

$$
F_1=f*\mu,\hskip.2cm F_2=\widecheck{f}*\mu,\hskip.2cm g=k,\hskip.2cm h=f
$$ 
we apply Theorem \ref{dAlemsol1} again to get the following result exactly in the same way as above.
\begin{theorem}\label{Fechsol2011}
Let $G$ be an abelian group, $\mu$ a measure on $G$, and let $f,k:G\to \C$ be nonzero functions such that the integral in \eqref{eq:WilsonSplotChangeIntegral} exists for each $x,y$ in $G$, further
\eqref{eq:WilsonSplotChangeIntegral} holds. Then we have the following 
possibilities
\begin{enumerate}[i)]
\item
\begin{equation*}
f(x)=\alpha\widehat{\mu}(\widecheck{m}) m(x)+ \alpha\widehat{\mu}(m) m(-x),\hskip.2cm k(x)=\widehat{\mu}(\widecheck{m})  m(x)+\widehat{\mu}(m) m(-x)\,,
\end{equation*}
for each $x$ in $G$, where $m$ is a non-even exponential, and $\alpha$ is a non-zero complex constant.
\item
\begin{equation*}
f(x)= 2 \alpha\widehat{\mu}(m_0) m_0(x),\hskip.2cm k(x)=2 \widehat{\mu}(m_0) m_0(x)
\end{equation*}
for each $x$ in $G$, where $m_0$ is an even exponential with $\widehat{\mu}(m_0)\ne 0$, and $\alpha$ is a non-zero complex constant. 
\end{enumerate}
Conversely, the functions $f,k$ given above are  solutions of 
\eqref{eq:WilsonSplotChangeIntegral}, whenever the given conditions are satisfied. If $G$ is a topological 
group, and $f$ or $k$ is continuous, then  $m$ and $m_0$ are continuous, too. 
If $G$ is locally compact, and $f$ or $k$ is Haar measurable, then $f,k,m,m_0$ are 
continuous. If $f$ or $k$ is essentially bounded and Haar measurable, then
$f,k,m,m_0$ are  continuous, moreover $m,m_0$ are characters of $G$. 
\end{theorem}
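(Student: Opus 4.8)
The plan is to reduce \eqref{eq:WilsonSplotChangeIntegral} to the already solved equation \eqref{dAlem1} and then to argue along the same lines as in the derivation of Theorem \ref{Fechsol} preceding it, merely with the two factors on the right-hand side playing interchanged roles. After interchanging $x$ and $y$ and setting $F_1=f*\mu$ and $F_2=\widecheck{f}*\mu$, equation \eqref{eq:WilsonSplotChangeIntegral} takes the form \eqref{dAlem1}, with $f$ now the factor depending on the first variable and $k$ the one depending on the second. We may assume $f\ne 0$ and $k\ne 0$, so by Theorem \ref{dAlemsol1} only its possibilities $(i)$, $(ii)$, $(iii)$ can occur; the remaining input to be used in each of them is that $F_1$ and $F_2$ are prescribed convolutions of the single function $f$ with $\mu$.

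First I would treat case $(i)$ of Theorem \ref{dAlemsol1}: there $f$ and $k$ are combinations of a common non-even exponential $m$ and of $\widecheck{m}$, while $F_1$ and $F_2$ are the displayed combinations of $m$ and $\widecheck{m}$ modified by a $2G$-periodic term $T$. Using the definition $\widehat{\mu}(m)=\int\widecheck{m}\,d\mu$, I would compute $f*\mu$ and $\widecheck{f}*\mu$ explicitly as combinations of $m$ and $\widecheck{m}$, equate them with the prescribed $F_1$ and $F_2$, and invoke the linear independence of $m$ and $\widecheck{m}$ together with the fact that a nonzero combination of $m$ and $\widecheck{m}$ with $m\ne\widecheck{m}$ is never $2G$-periodic; this forces $T=0$ and, since at least one coefficient of $f$ is nonzero, pins down the remaining constants, yielding possibility $(i)$ of the theorem. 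Then I would handle cases $(ii)$ and $(iii)$ of Theorem \ref{dAlemsol1}, in which the relevant exponential $m_0$ is even and an additive function $a$ may enter. Substituting the prescribed forms and computing the convolutions, I would match the parts odd about the origin in the two equations $F_1=f*\mu$ and $F_2=\widecheck{f}*\mu$; these force $a\equiv 0$. Comparing the remaining even parts then shows that the $2G$-periodic function $T$ is a scalar multiple of $m_0$ which does not affect $f$ or $k$, and leaves $k=2\widehat{\mu}(m_0)m_0$ with $f$ a nonzero multiple of it; in particular $\widehat{\mu}(m_0)\ne 0$, and we are in possibility $(ii)$.

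The converse is verified by a direct substitution of the two families into \eqref{eq:WilsonSplotChangeIntegral}, using only the definition of $\widehat{\mu}$. The regularity assertions follow from Theorem \ref{dAlemsol1}, or directly from Lemma~5.5 and Theorem~5.10 of \cite{MR1113488}; for the essentially bounded case one adds that a bounded exponential on a locally compact abelian group is a character (no additive term is present, so nothing further is needed there). I expect the only real difficulty to be the bookkeeping: carrying out the coefficient matching for each admissible form of $f$, keeping track of what can be absorbed into the $2G$-periodic term, and checking that the degenerate sub-cases --- for instance $\widehat{\mu}(m)=0$ for one of the exponentials involved, or $m_0^2\equiv 1$ --- either lead to a contradiction or are already contained in possibilities $(i)$ and $(ii)$.
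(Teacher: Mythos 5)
Your proposal is correct and takes essentially the same route as the paper: the paper proves Theorem \ref{Fechsol2011} only by remarking that, after interchanging $x$ and $y$ and setting $F_1=f*\mu$, $F_2=\widecheck{f}*\mu$, one applies Theorem \ref{dAlemsol1} and repeats the coefficient-matching already carried out for equation \eqref{Fech}. The case analysis you outline --- forcing $T=0$ via the linear independence of $m$ and $\widecheck{m}$ in the non-even case, forcing $a\equiv 0$ and absorbing $T$ in the even cases so that both collapse into possibility (ii), and reading off the regularity from Lemma 5.5 and Theorem 5.10 of \cite{MR1113488} --- is exactly that computation, correctly executed.
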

Observe that in case of equation \eqref{eq:WilsonSplotChangeIntegral} no additive function appears in the final form of the solution. The reason is that the function $g$ and $h$ being the solution of \eqref{dAlem1} described in Theorem \ref{dAlemsol1} cannot have simultaneously the same additive component.
\vskip.3cm
The above results cover also the previous research mentioned in the introduction. In the case of Gajda's 
equation \eqref{eq:Gajda} we have $f=k$ and the solution has the form
\begin{equation*}
f(x)=\widehat{\mu}(m) m(x)+\widehat{\mu}(\widecheck{m}) m(-x)
\end{equation*}
for each $x$ in $G$, where $m$ is an arbitrary exponential.
\vskip.3cm
In the case of d'Alembert's equation \eqref{eq:DalembertKlasyczny} the formula reduces to 
\begin{equation*}
f(x)=\frac{1}{2} \bigl(m(x)+ m(-x)\bigr)\,,
\end{equation*}
as in this case $\mu=\frac{1}{2}\delta_0$, hence
$$
\widehat{\mu}(m)=\frac{1}{2}\int  m(-y) d\delta_0(y)=\frac{1}{2} 
m(0)=\frac{1}{2}\,,
$$
and similarly $\widehat{\mu}(\widecheck{m})=\frac{1}{2}$.
\begin{bibdiv}
\begin{biblist}

\bib{MR2515239}{article}{
      author={Fechner, {\.Z}.},
       title={\rm A generalization of {G}ajda's equation},
        date={2009},
     journal={J. Math. Anal. Appl.},
      volume={354},
      number={2},
       pages={584\ndash 593},
}

\bib{MR2807038}{article}{
      author={Fechner, {\.Z}.},
       title={\rm A note on a modification of {G}ajda's equation},
        date={2011},
     journal={Aequationes Math.},
      volume={82},
      number={1-2},
       pages={135\ndash 141},
}

\bib{MR1065469}{article}{
      author={Gajda, Z.},
       title={\rm A generalization of d'{A}lembert's functional equation},
        date={1990},
     journal={Funkcial. Ekvac.},
      volume={33},
      number={1},
       pages={69\ndash 77},
}

\bib{MR0219936}{article}{
      author={Kannappan, Pl.},
       title={\rm The functional equation {$f(xy)+f(xy^{-1})=2f(x)f(y)$} for
  groups},
        date={1968},
     journal={Proc. Amer. Math. Soc.},
      volume={19},
       pages={69\ndash 74},
}

\bib{MR1038803}{book}{
      author={Rudin, W.},
       title={\rm Fourier analysis on groups},
      series={Wiley Classics Library},
   publisher={John Wiley \& Sons Inc.},
     address={New York},
        date={1990},
}

\bib{Stet13}{book}{
      author={Stetk{\ae r}, H.},
       title={\rm Functional equations on groups},
   publisher={World Scientific Publishing Co. Inc.},
     address={Singapore},
        date={2013},
}

\bib{MR1113488}{book}{
      author={Sz{\'e}kelyhidi, L.},
       title={\rm Convolution type functional equations on topological abelian
  groups},
   publisher={World Scientific Publishing Co. Inc.},
     address={Teaneck, NJ},
        date={1991},
}

\end{biblist}
\end{bibdiv}

\end{document}